\let\oldmarginpar\marginpar
\renewcommand\marginpar[1]{\-\oldmarginpar[\raggedleft\footnotesize #1]%
{\raggedright\footnotesize #1}}
\theoremstyle{plain}
\newtheorem{thm}[equation]{Theorem}
\newtheorem{lem}[equation]{Lemma}
\newtheorem{prop}[equation]{Proposition}
\newtheorem{cor}[equation]{Corollary}
\theoremstyle{definition}
\theoremstyle{remark}
\newtheorem{remark}[equation]{Remark}
\numberwithin{equation}{section}
\newcommand{\R}{\mathbb{R}}
\newcommand{\N}{\mathbb{N}}
\newcommand{\Rn}{{\mathbb{R}^n}}
\newcommand{\D}{{\mathcal D}}
\renewcommand{\H}{{\mathcal H}}
\renewcommand{\div}{\divop}
\renewcommand{\phi}{\varphi}
\renewcommand{\rho}{\varrho}
\renewcommand{\epsilon}{\varepsilon}
\renewcommand{\vartheta}{\theta}
\def\le{\leqslant}
\def\ge{\geqslant}
\def\diam{\qopname\relax o{diam}}
\def\div{\qopname\relax o{div}}
\def\dist{\qopname\relax o{dist}}
\def\I{\operatorname{\mathcal{I}}}
\def\J{\operatorname{\mathcal{J}}}
\newcommand{\ainc}[1]{\hyperref[defn:aInc]{{\normalfont(aInc){\ensuremath{_{#1}}}}}}
\newcommand{\adec}[1]{\hyperref[defn:aDec]{{\normalfont(aDec){\ensuremath{_{#1}}}}}}
\newcommand{\adeci}[1]{\hyperref[defn:aDeci]{{\normalfont(aDec){\ensuremath{_{#1}^\infty}}}}}
\date{\today}
\definecolor{blau}{rgb}{0.1,0.0,0.9}
\newcounter{komcounter}
\numberwithin{komcounter}{section}
\begin{document}

\title{Double phase image restoration}

% Information for first author

\author{Petteri Harjulehto}
 % Address of record for the research reported here
 \address{Petteri Harjulehto,
 Department of Mathematics and Statistics,
FI-20014 University of Turku, Finland}
% Current address
\email{\texttt{petteri.harjulehto@utu.fi}}

\author{Peter Hästö}
 % Address of record for the research reported here
 \address{Peter Hästö, Department of Mathematics and Statistics,
FI-20014 University of Turku, Finland, and Department of Mathematics,
FI-90014 University of Oulu, Finland}
% Current address
\email{\texttt{peter.hasto@oulu.fi}}

% General info
\subjclass[2010]{49J45; 49N45, 94A08}

%\date{\today}

%\dedicatory{This paper is dedicated to our advisors.}

\keywords{Image restoration, double phase, bounded variation, Gamma-convergence, relaxation, fractional maximal function}

\begin{abstract}
In this paper we explore the potential of the double phase functional in 
an image processing context. To this end, we study minimizers of the double phase energy 
for functions with bounded variation and show that this energy can be obtained 
by $\Gamma$-convergence or relaxation of regularized functionals. 
A central tool is a capped fractional maximal function of the 
derivative of $BV$ functions. 
\end{abstract}

\maketitle

%%%%%%%%%%%%%%%%%%%%%%%%%%%%%%%%%%%%%%%%%%%%%%%%%%%%%%%%
%%%%%%%%%%%%%%%%%%%%%%%%%%%%%%%%%%%%%%%%%%%%%%%%%%%%%%%%
%%%%%%%%%%%%%%%%%%%%%%%%%%%%%%%%%%%%%%%%%%%%%%%%%%%%%%%%

\section{Introduction}

%The double phase functional was introduced in the 1980s by Zhikov \cite{Zhi86}, but has 
%only recently become the focus of intense research, first by Baroni, Colombo and Mingione 
%\cite{BarCM15, BarCM16, BarCM18, ColM15a, ColM15b, ColM16}. 
%Subsequently many other researchers studied double phase problems as well, see, e.g., 
%\cite{ByuO17,  DeFOh19,  DeFM_pp2, DeFM_pp3, Ok17, Ok18, Ok_pp} for regularity theory, 
%\cite{ByuRS18, DeFM_pp1, Shi_pp} for Calder\'on--Zygmund estimates, 
%\cite{CloGHP_pp18, CupGGP18} for applications and 
%\cite{LiuD18, MaeOMS19, ZhaR18} for some other topics. 
%Also more general problems with double phase as a special case have been studies, e.g.\ in
%\cite{BenK_pp, EleMM16a, EleMM16b, EleMM18, HarHL_pp18, HarHT17, HasO_pp18, WanLZ19}.

The double phase functional was introduced in the 1980s by Zhikov \cite{Zhi86}, but has 
only recently become the focus of intense research, starting in 2015 with 
Baroni, Colombo and Mingione \cite{BarCM15, BarCM18, ColM15a, ColM16}. 
Subsequently, many other researchers studied double phase problems as well, see, e.g., 
\cite{ByuO17,  CupGGP18, DeFOh19,  DeFM_pp2, Ok18, Ok_pp} for regularity theory, 
\cite{ByuRS18, DeFM_pp1, Shi_pp} for Calder\'on--Zygmund estimates and 
\cite{GeLL19, LiuD18, MaeOMS19} for some other topics. 
Generalizations of the double phase functional have been studies, e.g.\ in
\cite{BenK_pp, EleMM16b, EleMM18, GwiSZ18, HarHT17, HasO_pp19, OhnS18, WanLZ19}.

Zhikov's original motivation for his functionals with non-standard growth was 
modelling physical phenomena. Another of his models, the variable exponent functional, 
was later applied also to the context of image processing, 
see \cite{AlaNA14, CheLR06, HarHLT13, LiLP10}. 
In this article, we demonstrate the potential also of the double phase functional in 
the image processing domain. This is to the best of our knowledge the first 
paper to consider the double phase functional in the space $BV$ of functions 
of bounded variation. 

In mathematical image processing, we interpret a function $u:\Omega\to \R$ as 
the gray-scale intensity at each location. If the function is discretized, we obtain 
an array of pixels common in computer implementations. Typically, $\Omega$ is a rectangle 
and the image contains different objects whose edges correspond to discontinuities 
of $u$. The presence of discontinuities makes this field challenging to approach 
with tools of analysis, but the $BV$ space has proven 
useful. We refer to the book \cite{AubK06} by Aubert and Kornprobst for an overview of 
PDE-based image processing. 

The classical ROF-model \cite{RudOF92} for image restoration calls for minimizing the energy 
\[
\inf_u \int_\Omega |\nabla u| + |u-f|^2\, dx,
\]
where $f$ is the given, corrupted input image that is to be restored. Here $|u-f|^2$ is a
fidelity term which forces $u$ to be close to $f$ on average, whereas the regularizing 
term $|\nabla u|$ limits the variation of $u$. This model is known to be prone 
to a stair-casing or banding effect whereby piecewise constant minimizers are often 
produced \cite{ChaL97}. On the other hand, replacing $|\nabla u|$ by $|\nabla u|^2$ 
leads to a heat-equation type problem, and solutions which are $C^\infty$. This is 
not usually desirable in the image processing context, as edges become blurred. 

The energy of the double phase functional combines growth with two different powers. 
It is given by the expression 
\[
\int_\Omega |\nabla u|^p + a(x) |\nabla u|^q\, dx.
\]
Here $a\ge 0$ is a bounded function and $p<q$. All the previously mentioned 
double-phase references concern super-linear growth (usually $p>1$, but see also \cite{DeFM_pp2}). 
However, for image processing, 
the case $p=1$ and $q=2$ is especially interesting (see above and the discussion in \cite{CheLR06}). 
Then the first term corresponds to the ROF-model, whereas the second term introduces a smoothing effect 
when $a>0$. The parameter $a$ is chosen such that $a=0$ at the edges in 
the image and $a>0$ elsewhere. Usually, the location of the edges is not known, so 
in applications $a$ is estimated from the initial data $f$. Then this adaptive model 
can avoid the stair-casing effect of the ROF-model. 

In the case $p=1$, the double phase energy must naturally be studied in a space 
of $BV$-type. It is not difficult to prove existence of the minimizer even in this 
case (cf.\ Proposition~\ref{prop:existence}). However, the $BV$-space is quite ill-behaved, 
so it is useful for practical implementations to approximate the energy by more 
regular functionals (see, e.g., \cite[Section~6]{Ves01} in the image processing context). The notion of 
$\Gamma$-convergence is often employed in this situation \cite{Bra02, Dal93}, and this article is no 
exception: our main result (Theorems~\ref{thm:Gamma} and \ref{thm:Gamma2}) shows that the $BV$ double phase functional 
(with fidelity term)
\[
|Du|(\Omega) +  \int_\Omega (a(x) |\nabla u|)^2 + |u-f|^2  \, dx
\]
can be approximated in the sense of $\Gamma$-convergence by both
\[
\int_\Omega |\nabla u|^{1+\epsilon} + (a(x) |\nabla u|)^2 + |u-f|^2  \, dx
\quad\text{and}\quad
\int_\Omega |\nabla u|+ (\epsilon + a(x)^2) |\nabla u|^2 + |u-f|^2  \, dx.
\]
Finally, in Corollary~\ref{cor:relaxation}, we show that the $BV$ double phase functional 
can be understood as the relaxation of the $W^{1,1}$ double phase functional. 

%\begin{remark}
Note that we use $a$ inside the power-function, $(a(x)t)^2$. This is of course equivalent 
to having another function outside, but it turns out that the condition on $a$ can be more conveniently 
expressed with this formulation (see Remark~\ref{rem:condition}). 
%This idea is borrowed from \cite{MaeOMS19}.
%\end{remark}

%%%%%%%%%%%%%%%%%%%%%%%%%%%%%%%%%%%%%%%%%%%%%%%%%%%%%%%%
%%%%%%%%%%%%%%%%%%%%%%%%%%%%%%%%%%%%%%%%%%%%%%%%%%%%%%%%
%%%%%%%%%%%%%%%%%%%%%%%%%%%%%%%%%%%%%%%%%%%%%%%%%%%%%%%%

\section{Notation and existence of minimizers of bounded variation}

We consider subsets of the Euclidean space $\Rn$, $n\ge 2$. The most interesting 
case for image processing is $n=2$, but we can include higher dimensions without
extra complication. 
By $\Omega \subset \Rn$ we denote a bounded domain, i.e.\ an open and connected 
set. The notation $f\lesssim g$ means that there exists a constant
$C>0$ such that $f\le C g$. 
%The notation $f\approx g$ means that
%$f\lesssim g\lesssim f$ whereas $f\simeq g$ means that 
%$f(t/C)\le g(t)\le f(Ct)$ for some constant $C\ge 1$. 
By $c$ we denote a generic constant whose value may change between appearances.
Let $a \in L^\infty (\Omega)$ be non-negative.
By $L^p_a(\Omega)$ we denote the weighted Lebesgue space with weight $a$, given by 
the norm
\[
\|u\|_{L^p_a(\Omega)} := \|a\,u \|_{L^p(\Omega)} = \Big(\int_\Omega (a(x)|u|)^p \, dx\Big)^\frac1p.
\]
$W^{1,p}_a(\Omega)$ is the corresponding Sobolev space. Note that we use the ``weight as multiplier'' formulation, 
so the corresponding weighted measure is $d\mu = a^p\, dx$, not $d\mu = a\, dx$.
By $\H^k$ we denote the $k$-dimensional Hausdorff measure. 
By $|\mu|$ we denote the total variation measure of a vector measure $\mu$, defined as
\[
|\mu|(A) = \sup\Big\{ \sum_{i\in \N} |\mu(A_i)|\, \Big|\, \bigcup_{i\in \N} A_i=A,\ A_i \text{ disjoint and measurable}\Big\}.
\]
By $Mu$ we denote the Hardy--Littlewood maximal function of $u$. 

A function $u \in L^1(\Omega)$ has \textit{bounded variation}, denoted $u \in BV(\Omega)$, if
\[
|Du|(\Omega) := \sup \bigg\{ \int_\Omega u \div \phi \, dx \, \Big|\, \phi\in C^1_0(\Omega;\Rn), 
|\phi|\le 1 \bigg\} < \infty.
\]
Note that this quantity is sometimes denoted by $\|Du\|(\Omega)$. We follow the notation of
\cite{AmbFP00}, which is convenient since it turns out that $|Du|$ is the total variation of a vector measure $Du$. Furthermore, $Du$ can be decomposed as  
\begin{equation}\label{eq:decomposition}
Du = \nabla u \, \H^n + (u_+-u_-) \nu_u \H^{n-1}|_{J_u} + C_u,
\end{equation}
where $\nabla u$ is the absolutely continuous part of the derivative, 
$u_+-u_-$ is the essential point-wise jump of the function, 
$\nu_u$ is the normal of the level-set, $J_u$ is a set of Hausdorff dimension at most
$n-1$ \cite[Theorem~2.3]{Amb90} and the Cantor part $C_u$ has the property that 
$C_u(A)=0$ if $\H^{n-1}(A)<\infty$ \cite[Proposition~3.92]{AmbFP00}. 
The space $BV$ has the following precompactness property \cite[Proposition~3.13]{AmbFP00}: 
if $\sup_i \big(|Du_i|(\Omega) + \|u_i\|_{L^1(\Omega)}\big)<\infty$, then 
there exists a subsequence, denoted again by $(u_i)$, and $u\in BV(\Omega)$ such that 
\begin{equation}\label{eq:precomactness}
u_i\to u\text{ in } L^1(\Omega)
\quad\text{and}\quad
|Du|(\Omega)\le \liminf |Du_i|(\Omega).
\end{equation}
The derivative of the convolution of a $BV$-function can be calculated as expected 
using either the derivative-measure or the function \cite[Proposition~3.2 and equation (2.2)]{AmbFP00}:
\begin{equation}\label{eq:convolution}
\nabla(u*\eta_\delta)(x) 
= \int_\Rn  \eta_\delta(x-y)\, dDu(y) 
= \int_\Rn u(y) \nabla \eta_\delta(x-y)\, dy.
\end{equation}
We refer to \cite{Amb90, AmbFP00,Bra02} for more information about $BV$ spaces.

%Let $\Omega\subset \Rn$ be bounded, $a\in L^\infty(\Omega)$ be non-negative and
%$f \in L^2(\Omega)$ be the initial data. 
We abbreviate 
$
BV^{1,2}_a(\Omega) := BV(\Omega)\cap W^{1,2}_a(\Omega)\cap L^2(\Omega)
$
and define for $u \in BV^{1,2}_a(\Omega)$ and initial data $f \in L^2(\Omega)$ 
the $BV$ double phase functional 
\[
\I(u, A):=  |Du|(A) +  \int_A (a(x) |\nabla u|)^2 + |u-f|^2  \, dx
\]
for measurable $A\subset\Omega$. 
We can easily show the existence of a minimizer for this functional using the direct method
of calculus of variations:

\begin{prop}\label{prop:existence}
There exists a unique minimizer $u\in BV^{1,2}_a(\Omega)$, i.e.
\[
\I(u,\Omega) = \inf_{v\in BV^{1,2}_a(\Omega)} \I(v,\Omega).
\]
\end{prop}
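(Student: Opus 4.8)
The plan is to apply the direct method of the calculus of variations. First I would note that the admissible class $BV^{1,2}_a(\Omega)$ is non-empty: the zero function lies in it and $\I(0,\Omega)=\int_\Omega|f|^2\,dx<\infty$ because $f\in L^2(\Omega)$, so $m:=\inf_{v\in BV^{1,2}_a(\Omega)}\I(v,\Omega)\in[0,\infty)$. Pick a minimizing sequence $(u_j)$ with $\I(u_j,\Omega)\le m+1$ for every $j$. Since the three terms of $\I$ are each non-negative, this immediately yields the a priori bounds $|Du_j|(\Omega)\le m+1$, $\int_\Omega(a|\nabla u_j|)^2\,dx\le m+1$, and $\|u_j-f\|_{L^2(\Omega)}^2\le m+1$; the last one, together with $f\in L^2(\Omega)$ and $|\Omega|<\infty$, gives bounds for $\|u_j\|_{L^2(\Omega)}$ and hence for $\|u_j\|_{L^1(\Omega)}$.

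Next I would extract a subsequence (not relabelled). By the $BV$ precompactness recalled in \eqref{eq:precomactness} there is $u\in BV(\Omega)$ with $u_j\to u$ in $L^1(\Omega)$ and $|Du|(\Omega)\le\liminf_j|Du_j|(\Omega)$; passing to a further subsequence we may assume $u_j\to u$ a.e., so $u\in L^2(\Omega)$ and, by Fatou's lemma, $\int_\Omega|u-f|^2\,dx\le\liminf_j\int_\Omega|u_j-f|^2\,dx$. What remains is the weighted second term, together with the verification that $u$ actually belongs to $W^{1,2}_a(\Omega)$ — i.e.\ that the density $\nabla u$ of the absolutely continuous part of $Du$ in the decomposition \eqref{eq:decomposition} satisfies $a\nabla u\in L^2(\Omega)$.

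This last point is the \emph{main obstacle}, because $L^1$-convergence of $BV$ functions (equivalently, weak-$*$ convergence of $Du_j$ as measures) does not by itself control the absolutely continuous part of the limiting derivative. I would handle it by invoking a lower semicontinuity theorem for convex integral functionals of vector measures (of Serrin--Reshetnyak type; see e.g.\ \cite{AmbFP00}), applied to $Du_j\rightharpoonup^{*}Du$ with the integrand $g(x,\xi)=(a(x)|\xi|)^2$, whose recession function satisfies $g^\infty(x,\xi)=+\infty$ whenever $a(x)\xi\ne 0$ and $=0$ otherwise. This gives
\[
\int_\Omega (a|\nabla u|)^2\, dx \;+\; \int_\Omega g^\infty\big(x,\tfrac{d(Du)^s}{d|(Du)^s|}\big)\, d|(Du)^s| \;\le\; \liminf_{j}\int_\Omega (a|\nabla u_j|)^2\, dx \;\le\; m+1 .
\]
Finiteness of the right-hand side forces the singular part of $Du$ to be carried by $\{a=0\}$ and, crucially here, $\int_\Omega(a|\nabla u|)^2\,dx<\infty$, so $u\in W^{1,2}_a(\Omega)$ and hence $u\in BV^{1,2}_a(\Omega)$. (A more hands-on alternative: extract a weak $L^2$-limit $V$ of the bounded sequence $(a\nabla u_j)$, identify $V=a\nabla u$ by mollifying the weight $a$ and using \eqref{eq:convolution}, and then apply weak lower semicontinuity of the $L^2$-norm; this is more elementary but needs the same care with the singular part.) Summing the three $\liminf$ inequalities gives $\I(u,\Omega)\le\liminf_j\I(u_j,\Omega)=m$, so $u$ is a minimizer.

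Finally, for uniqueness I would argue by strict convexity. The map $v\mapsto|Dv|(\Omega)$ is convex, being a supremum of linear functionals; the map $v\mapsto\nabla v$ is linear on $BV(\Omega)$ because the Lebesgue decomposition of $D(tv_1+(1-t)v_2)=tDv_1+(1-t)Dv_2$ splits as the corresponding combination of the decompositions of $Dv_1$ and $Dv_2$, so $v\mapsto\int_\Omega(a|\nabla v|)^2\,dx$ is convex by convexity of $\xi\mapsto|\xi|^2$; and $v\mapsto\int_\Omega|v-f|^2\,dx$ is strictly convex on $L^2(\Omega)$. Since $BV^{1,2}_a(\Omega)$ is a convex set, $\I(\cdot,\Omega)$ is strictly convex on it, and therefore it cannot have two distinct minimizers.
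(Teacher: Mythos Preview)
Your proof follows the same direct-method skeleton as the paper's (minimizing sequence, $BV$ precompactness, lower semicontinuity term by term, strict convexity for uniqueness), and the treatment of the $BV$ and fidelity terms is essentially identical. The genuine difference lies in how you handle the weighted term $\int_\Omega (a|\nabla u|)^2\,dx$ and the verification that $u\in W^{1,2}_a(\Omega)$. The paper argues abstractly: it cites reflexivity of $W^{1,2}_a(\Omega)$ from \cite{HarH19} to extract a weakly convergent subsequence in that space, and then invokes weak lower semicontinuity of the modular from \cite{DieHHR11}. You instead work at the level of the measures $Du_j\rightharpoonup^{*}Du$ and apply a Serrin--Reshetnyak-type lower semicontinuity theorem to the convex integrand $g(x,\xi)=(a(x)|\xi|)^2$, using that $g^\infty=+\infty$ off $\{a=0\}$ to force the singular part of $Du$ onto $\{a=0\}$ and to obtain $a\nabla u\in L^2$ in one stroke. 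Your route is more explicit about \emph{why} the limit lands in $BV^{1,2}_a$, a point the paper leaves implicit in the identification of the weak $W^{1,2}_a$-limit with $u$. One small caveat: the Reshetnyak-type result you cite usually needs joint lower semicontinuity of $g$ in $(x,\xi)$, hence at least lower semicontinuity of $a$, whereas the proposition only assumes $a\in L^\infty$; your parenthetical alternative (weak $L^2$-compactness of $(a\nabla u_j)$ and identification of the limit as $a\nabla u$) avoids this and is in fact closer in spirit to the paper's reflexivity argument, just carried out by hand.
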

\begin{proof}
Let $u_i$ be a minimizing sequence, that is $u_i\in  BV^{1,2}_a(\Omega)$ with 
\[
\lim_{i\to\infty}\I(u_i,\Omega) = \inf_{v\in BV^{1,2}_a(\Omega)} \I(v,\Omega).
\]
By $BV$-precompactness \eqref{eq:precomactness} 
there exists a subsequence, denoted again by $(u_i)$, such that $u_i\to u$ in $L^1(\Omega)$ and 
$|Du|(\Omega) \le \liminf |Du_i|(\Omega)$.
The space $W^{1,2}_a(\Omega)$ is reflexive \cite[Theorem~3.6.8]{HarH19}, so we can find a 
weakly convergent subsequence $(u_i)$. By \cite[Theorem~2.2.8]{DieHHR11}, the modular in 
$W^{1,2}_a(\Omega)$ is weakly lower semicontinuous, so that
\[
\int_\Omega (a(x) |\nabla u|)^2 \, dx \le \liminf \int_\Omega (a(x) |\nabla u_i|)^2 \, dx.
\]
The inequality for the term $|u-f|^2$ follows analogously. Hence $u$ is a minimizer. 

Finally, we note that the $BV$ and $W^{1,2}_a$ parts are convex and 
the $|u-f|^2$ part is strictly convex, so the usual argument yields uniqueness, namely, 
if $u$ and $v$ are distinct minimizers, then we obtain a contradiction from 
$\I(\frac{u+v}2,\Omega)< \frac12 (\I(u,\Omega)+ \I(v,\Omega))$. 
\end{proof}

%%%%%%%%%%%%%%%%%%%%%%%%%%%%%%%%%%%%%%%%%%%%%%%%%%%%%%%%
%%%%%%%%%%%%%%%%%%%%%%%%%%%%%%%%%%%%%%%%%%%%%%%%%%%%%%%%
%%%%%%%%%%%%%%%%%%%%%%%%%%%%%%%%%%%%%%%%%%%%%%%%%%%%%%%%

\section{Lower estimates for the \texorpdfstring{$BV$}{BV} double phase functional}

To be able to construct the minimizers of $\I$ with some numerical scheme, we must show that 
the $BV$ double phase functional can be approximated by some more regular variants. 
We regularize the functional by adding $\epsilon$ either to the exponent of the 
first term (so that the problem is in $W^{1,1+\epsilon}(\Omega)$) or to the weight 
$a$ (in which case the problem is in $W^{1,2}(\Omega)$). For brevity, we present the 
proof only for one case which includes both these regularizations: 
\[
\I_\epsilon(u,A):=  \int_A |\nabla u|^{1+\epsilon} + (\epsilon + a(x)^2) |\nabla u|^2 + |u-f|^2  \, dx.
\]
We start with a lower bound for $\I$, which is the more difficult part. 

\begin{lem}\label{lem:limsup}
Let $F\subset \Omega$ be closed and $a\in C^{0,1}(\Omega)$. 
For $\epsilon_i\to 0^+$ and $u\in BV^{1,2}_a(\Omega)$, 
there exist $u_i\in W^{1,2}(U)$ in a neighborhood $U$ of $F$ such that 
\[
\limsup_{i\to \infty} \I_{\epsilon_i}(u_i,F) \le \I(u,F).
\]
\end{lem}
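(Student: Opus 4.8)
The plan is to build a recovery sequence by regularising $u$. The basic object is a mollification $u^\delta:=u*\eta_\delta$, defined and smooth on a neighbourhood $U$ of $F$ once $\delta$ is small, so that $u^\delta\in W^{1,2}(U)$ and $u^\delta\to u$ in $L^2(U)$; hence the fidelity term passes to the limit, $\int_F|u^\delta-f|^2\,dx\to\int_F|u-f|^2\,dx$. Using the convolution identity \eqref{eq:convolution} one has $\int_F|\nabla u^\delta|\,dx\le|Du|(\{y:\dist(y,F)<\delta\})$, and since $|Du|$ is a finite measure, continuity from above gives $\limsup_{\delta\to0}\int_F|\nabla u^\delta|\,dx\le|Du|(F)$. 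The two ``regularising'' terms are then handled through $\int_F|\nabla u^\delta|^{1+\epsilon}\,dx\le\|\nabla u^\delta\|_{L^\infty(F)}^{\epsilon}\int_F|\nabla u^\delta|\,dx$ and $\epsilon\int_F|\nabla u^\delta|^2\,dx\le\epsilon\|\nabla u^\delta\|_{L^\infty(F)}\int_F|\nabla u^\delta|\,dx$, combined with the crude bound $\|\nabla u^\delta\|_{L^\infty(F)}\lesssim\delta^{-n}|Du|(\Omega)$: if the scale $\delta=\delta_i\to0$ is coupled to $\epsilon_i$ so that $\epsilon_i|\log\delta_i|\to0$ and $\epsilon_i\delta_i^{-n}\to0$ (e.g.\ $\delta_i=\epsilon_i^{1/(2n)}$), then $\|\nabla u^{\delta_i}\|_{L^\infty(F)}^{\epsilon_i}\to1$ and both contributions vanish.

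The heart of the proof is the weighted double phase term $\int_F(a|\nabla u^\delta|)^2\,dx$, for which one must show $\limsup_{\delta\to0}\int_F(a|\nabla u^\delta|)^2\,dx\le\int_F(a|\nabla u|)^2\,dx$. Writing $a\nabla u^\delta=(a\,Du)*\eta_\delta+R_\delta$ with $R_\delta(x)=\int(a(x)-a(y))\eta_\delta(x-y)\,dDu(y)$, the first summand converges to $a\nabla u$ in $L^2(F)$, so everything rests on the commutator $R_\delta$, which is controlled by the oscillation $|a(x)-a(y)|\le\min\{L_a\delta,\,a(x)+a(y)\}$ of the Lipschitz weight $a$. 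On the region $\{a\ge\sqrt{\delta}\}$ one has $|\nabla u|\le a|\nabla u|/\sqrt{\delta}$, which is square-integrable, and the gain $|a(x)-a(y)|\le L_a\delta$ beats the resulting $\delta^{-1/2}$; on the region $\{a<\sqrt{\delta}\}$ the factor $a(x)\lesssim\sqrt{\delta}$ tames the product. The delicate case is where $\nabla u$ is badly non-square-integrable right at the zero set of $a$: there plain mollification may fail to keep $a\nabla u^\delta$ small in $L^2$, and one replaces $u$ near $\{a=0\}$ by an approximation with \emph{capped} gradient built from the capped fractional maximal function of $Du$ — the central tool announced in the abstract. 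This capped replacement lies in $W^{1,2}$, is $L^1$- and $L^2$-close to $u$, does not overshoot the total variation (the constant loss of ordinary Lipschitz truncation being precisely what the fractional, capped version removes), and, multiplied by the small factor $a^2$ near $\{a=0\}$, keeps the $q=2$ term below $\int_F(a|\nabla u|)^2\,dx+o(1)$; it is glued to the plain mollification, used where $a$ is bounded below and $\nabla u$ is automatically square-integrable, by a cut-off $\xi$ adapted to a level set $\{a=\tau_i\}$, the gluing term $\nabla\xi$ times the difference of the two pieces tending to $0$ in $L^2$ because both approximate $u$.

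I expect the main obstacle to be precisely this last point: producing, near $\{a=0\}$, a $W^{1,2}$ replacement for $u$ that is simultaneously close to $u$, does not increase the total variation asymptotically, and is so controlled that the weighted $q=2$ term is kept under $\int_F(a|\nabla u|)^2\,dx+o(1)$ — and doing so uniformly enough that, after also coupling the thresholds $\tau_i$ and scales $\delta_i$ to $\epsilon_i$, the pure-$\epsilon$ errors disappear. Once this replacement and its basic estimates are in hand, the remaining steps — combining the pieces, the gluing, and the diagonal limit $\epsilon_i\to0$ — are routine, the convolution identities and the (semi)continuity of $|Du|$ from above finishing the argument.
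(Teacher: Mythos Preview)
Your framework --- mollify, couple $\delta_i$ to $\epsilon_i$, kill the $\epsilon$-terms via $\|\nabla u^\delta\|_{L^\infty}\lesssim\delta^{-n}$ --- matches the paper and is correct. The gap is in the weighted term, where you assert that ``plain mollification may fail to keep $a\nabla u^\delta$ small in $L^2$'' near $\{a=0\}$ and then reach for the capped fractional maximal function and a glued, capped-gradient replacement. Both moves are unnecessary here, and the second is not actually carried out in your sketch (you yourself flag it as the main obstacle).

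The paper shows that plain mollification \emph{does} give a $\delta$-independent $L^2$ majorant for $a|\nabla u_\delta|$, via a pointwise dichotomy. If $0<a(x)\le 2a(y)$ for all $y\in B(x,\delta)$, then $Du$ is absolutely continuous in $B(x,\delta)$ and $a(x)|\nabla u_\delta(x)|\le 2\,(a|\nabla u|)*\eta_\delta(x)\lesssim M(a|\nabla u|)(x)\in L^2$. If instead $a(x)>2a(y)$ for some $y\in B(x,\delta)$, Lipschitz regularity forces $a(x)\lesssim\delta$; now use the \emph{second} identity in \eqref{eq:convolution}, $\nabla u_\delta=u*\nabla\eta_\delta$, together with $|\delta\,\nabla\eta_\delta|\lesssim\delta^{-n}\chi_{B(0,\delta)}$, to obtain $a(x)|\nabla u_\delta(x)|\lesssim Mu(x)\in L^2$, since $u\in L^2(\Omega)$ by the definition of $BV^{1,2}_a$. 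Dominated convergence (with $\nabla u_\delta\to\nabla u$ a.e.\ on $\{a>0\}$) then gives $\int_F(a|\nabla u_\delta|)^2\,dx\to\int_F(a|\nabla u|)^2\,dx$. The step your commutator decomposition could not close is exactly this: near $\{a=0\}$ one trades the derivative onto the mollifier and lets the factor $a(x)\lesssim\delta$ absorb the resulting $\delta^{-1}$.

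The capped fractional maximal function does appear in the paper, but only in the companion Lemma~\ref{lem:limsup2}, where $a\in C^{0,\alpha}$ with $\alpha>\tfrac12$; there the second case yields only $a(x)\lesssim\delta^\alpha$, which no longer fully cancels $\delta^{-1}$, and one compensates via $u\in L^\infty$, the decay $|Du(B(x,r))|\lesssim r^{n-1}$, and Proposition~\ref{prop:fractional} --- still as a dominating function for the mollification, not as a tool to build a truncated replacement. For the Lipschitz case of the present lemma, none of that machinery is needed.
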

\begin{proof}
Let $u_\delta:= u * \eta_\delta$ be the convolution with the standard mollifier and 
assume that $\delta<\operatorname{dist}(F,\partial \Omega)$. 
By \cite[Lemma~4.5]{HarHL08} and classical $L^2$-results 
\[
\limsup_{\delta\to 0} \Big[ |Du_\delta|(F) +\int_F |u_\delta-f|^2\, dx \Big]
\le
|Du|(F) + \int_F |u-f|^2 \, dx.
\]

For the term with the weight $a$, we consider two cases and use the different expressions 
from \eqref{eq:convolution}. If $0<a(x)\le 2 a(y)$ for all $y\in B(x,\delta)$, then 
\begin{equation*}%\label{eq:conv1}
a(x) |\nabla u_\delta| 
\le
2\int_\Rn a(y) |\nabla u(y)|\,\eta_\delta(x-y)\, dy
\lesssim 
M(a|\nabla u|)(x);
\end{equation*}
note that the condition $0<a(y)$ with $u\in W^{1,2}_a(\Omega)$ 
ensures that $Du=\nabla u$ is absolutely continuous 
in $B(x,\delta)$ and note also that the last inequality follows from elementary estimates 
(e.g.\ \cite[Lemma~4.6.3]{DieHHR11}). Furthermore, since $a|\nabla u|\in L^2(\Omega)$ and 
the maximal operator is bounded on $L^2(\Omega)$, we see that the function 
on the right-hand side is in $L^2(\Omega)$, as well. If $a(x)=0$, then the estimate trivially holds. 
Suppose then that $a(x)> 2 a(y)$ for some $y\in B(x,\delta)$. Since $a\in C^{0,1}(\Omega)$, 
we obtain the inequality
\[
a(x) - c\,|x-y| \le a(y) \le \tfrac12 a(x),
\]
so that $a(x) \lesssim |x-y|\le \delta$. Therefore
\begin{equation}\label{eq:conv2}
a(x) |\nabla u_\delta| 
\lesssim
\int_\Rn \delta \,|u(y)\nabla\eta_\delta(x-y)|\, dy
\lesssim
\frac1{|B(x,\delta)|}\int_{B(x,\delta)} |u(y)|\, dy
%= 
%|u| * \tilde \eta_\delta(x)
\le Mu(x),
\end{equation}
where we used that $|\delta \nabla \eta_\delta|\lesssim  \delta^{-n} \chi_{B(x,\delta)}$ 
for the middle step. 
Again, since $u\in L^2(\Omega)$, we obtain an upper bound independent of $\delta$ 
in the space $L^2(\Omega)$. In the set $\{a>0\}$ we have $\nabla u_\delta\to \nabla u$ 
almost everywhere. Thus it follows by dominated convergence in $L^2(\Omega)$ that 
\[
\lim_{\delta\to 0} \int_F (a(x) |\nabla u_\delta|)^2 dx 
=
\int_F (a(x) |\nabla u|)^2 dx.
\]

We have so far shown that 
\[
\limsup_{\delta\to 0} \I(u_\delta, F) \le \I(u,F). 
\]
It remains to change the first functional from $\I$ to $\I_{\epsilon_i}$. 
Equation~\eqref{eq:convolution} implies $|\nabla u_\delta| \le \frac c {\delta^n}$, where $c$ depends on $|Du|(\Omega)$. Therefore 
\[
\int_F |\nabla u_\delta|^{1+\epsilon} + (\epsilon + a(x)^2) |\nabla u_\delta|^2 \, dx
\le 
(\tfrac c {\delta^n})^\epsilon |Du_\delta|(F) 
+ \int_F (a(x) |\nabla u_\delta|)^2  \, dx + \epsilon(\tfrac c {\delta^n})^2 |F|. 
\]
We choose $\delta_i:=\epsilon_i^{1/(3n)}$ so that $(\tfrac c {\delta_i^n})^{\epsilon_i}\to 1$ and 
$\epsilon_i(\tfrac c {\delta_i^n})^2 \to 0$ and set $u_i:=u_{\delta_i}$. Then 
\begin{align*}
&\limsup_{i\to \infty} \I_{\epsilon_i}(u_i, F) \\
&\qquad \le
\limsup_{i\to \infty} \Big[ (\tfrac c {\delta_i^n})^{\epsilon_i} |Du_i|(F) 
+ \int_F (a(x) |\nabla u_i|)^2 +|u_i-f|^2 \, dx 
+ \epsilon_i(\tfrac c {\delta_i^n})^2 |F| \Big]\\
&\qquad =
\limsup_{i\to \infty} \Big[ |Du_i|(F) + \int_F (a(x) |\nabla u_i|)^2 +|u_i-f|^2 \, dx \Big]
\le \I(u,F). \qedhere
\end{align*}
\end{proof}

\begin{remark}\label{rem:condition}
From the previous proof we can see that the exact condition used 
for $a$ is not $C^{0,1}(\Omega)$, but rather the inequality
\[
a(x) \lesssim \max\{|x-y|, a(y)\}
\quad\text{for all }
x,y\in \Omega.
\]
This means that we could replace $a(x)^2$ in the double phase functional with $a(x)^q$ for 
$a\in C^{0,\alpha}(\Omega)$ as long as $q\alpha \ge 2$. 
This kind of condition was first 
identified for the double phase functional in \cite[Section~7.2]{HarH19}. 
%From \cite{HarH19}
%we also see that this condition is equivalent to the so-called (A1)-assumption in the case 
%$p=1$ and $q=2$, which in turn is crucially linked to the boundedness of the 
%maximal operator (or convolution with a $C^\infty_0$-function), cf.\ \cite[Section~4.3]{HarH19}.
%This means that our assumption is essentially optimal for approximation by 
%convolution of $L^2(\Omega)$ functions.
\end{remark}

With the method of the previous proof, one can obtain from \eqref{eq:conv2}
that $a(x)|\nabla u_\delta|$ is bounded 
by $M_{\alpha}(Du)$ when $a\in C^{0,\alpha}(\Omega)$ and $M_\alpha$ denotes the 
fractional maximal operator (cf.\ Lemma~\ref{lem:limsup2}). 
This will allow us to prove the result for bounded functions $u$ with a larger 
class of weights $a$. 
%\[
%M_\alpha \mu(x) := \sup_{x,r} \frac{\mu(B(x,r))}{|B(x,r)|^{1-\frac\alpha n}}.  
%\] 
A number of recent studies, e.g.\ \cite{CarM17, CarMP17}, deal with the question of
the Sobolev regularity of the maximal function $M_\alpha u$ of a Sobolev or $BV$ function $u$.
However, we have not found any results on the maximal function of the 
derivative of a $BV$ function. Therefore, the following result may be of independent interest.

\begin{prop}\label{prop:fractional}
Let $\mu$ be a vector Borel measure in $\Omega$ with finite total variation $|\mu|(\Omega)<\infty$, 
$\sigma\in (0,n)$ and $\alpha\in (0,n-\sigma)$.
%Assume that the Ahlfors regularity condition 
%$|\mu(B(x,r))| \lesssim r^{\sigma}$ holds for some $\sigma\in (0,n)$ and all $x\in\Omega$ and 
%$r\le \diam\Omega$. 
Then the capped fractional maximal function 
\[
M_\alpha^\sigma \mu(x) := 
\sup_{r\le\diam\Omega} \frac{\min\{|\mu|(B(x,r)),r^\sigma\}}{|B(x,r)|^{1-\frac\alpha n}}
\]
belongs to $L^p(\Omega)$ if $p< 1 +\frac\alpha{n-\sigma-\alpha}$. 

Furthermore, the bound is sharp since the claim does not hold 
for $p \ge 1 +\frac\alpha{n-\sigma-\alpha}$. 
\end{prop}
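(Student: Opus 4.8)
The plan is to estimate the distribution function of $M_\alpha^\sigma\mu$ by a covering argument, then integrate. Fix $\lambda>0$ and set $E_\lambda := \{x\in\Omega : M_\alpha^\sigma\mu(x)>\lambda\}$. For each $x\in E_\lambda$ there is a radius $r=r_x\le\diam\Omega$ with $\min\{|\mu|(B(x,r)),r^\sigma\} > \lambda\,|B(x,r)|^{1-\alpha/n} = c_n\lambda\,r^{n-\alpha}$. In particular $r^\sigma > c_n\lambda r^{n-\alpha}$, so $r < (c\lambda)^{-1/(n-\sigma-\alpha)}$; call this upper bound $R_\lambda$, and note $R_\lambda\to 0$ as $\lambda\to\infty$, which is exactly why the capping helps. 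Also $|\mu|(B(x,r_x)) > c_n\lambda r_x^{n-\alpha}$. Apply the basic (5r-)Vitali covering lemma to $\{B(x,r_x)\}_{x\in E_\lambda}$ to extract a countable disjoint subfamily $\{B_i=B(x_i,r_i)\}$ with $E_\lambda\subset\bigcup_i 5B_i$. Then
\[
|E_\lambda| \le \sum_i |5B_i| = 5^n c_n \sum_i r_i^n
= 5^n c_n \sum_i r_i^{\alpha}\, r_i^{n-\alpha}
\le 5^n c_n\, R_\lambda^{\alpha}\sum_i r_i^{n-\alpha}
\le \frac{5^n}{c_n\lambda} R_\lambda^{\alpha}\sum_i |\mu|(B_i)
\le \frac{C}{\lambda}\, R_\lambda^{\alpha}\,|\mu|(\Omega),
\]
using disjointness of the $B_i$ in the last step. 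Since $R_\lambda^\alpha = (c\lambda)^{-\alpha/(n-\sigma-\alpha)}$, this gives the weak-type bound $|E_\lambda| \le C\,|\mu|(\Omega)\,\lambda^{-1-\alpha/(n-\sigma-\alpha)}$.

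Write $\beta := 1+\frac{\alpha}{n-\sigma-\alpha}$, so the above reads $|E_\lambda|\lesssim \lambda^{-\beta}$. To pass to $L^p$, use the layer-cake formula
\[
\int_\Omega (M_\alpha^\sigma\mu)^p\,dx
= p\int_0^\infty \lambda^{p-1} |E_\lambda|\,d\lambda.
\]
For $\lambda\le 1$ we bound $|E_\lambda|\le|\Omega|<\infty$, and the integral $\int_0^1\lambda^{p-1}|\Omega|\,d\lambda$ converges for $p>0$. For $\lambda\ge 1$ we use the weak bound: $\int_1^\infty \lambda^{p-1}\lambda^{-\beta}\,d\lambda$ converges precisely when $p-1-\beta<-1$, i.e. $p<\beta$. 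This yields $M_\alpha^\sigma\mu\in L^p(\Omega)$ for $p<\beta$, as claimed. (Here one should also remark that $M_\alpha^\sigma\mu$ is measurable, e.g.\ because for fixed $r$ the map $x\mapsto|\mu|(B(x,r))$ is lower semicontinuous and the sup over $r$ may be taken over rationals together with $\diam\Omega$.)

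For sharpness, the natural candidate is $\mu = \delta_0$, the Dirac mass at the origin (assume $0\in\Omega$), which has $|\mu|(\Omega)=1<\infty$. For $x$ near $0$ and $r\approx|x|$ we have $|\mu|(B(x,r))=1$ once $r>|x|$, so taking $r$ slightly larger than $|x|$ gives $M_\alpha^\sigma\mu(x) \gtrsim \frac{\min\{1,|x|^\sigma\}}{|x|^{n-\alpha}} \approx |x|^{\sigma-n+\alpha}$ for small $|x|$ (the cap $r^\sigma$ is the active term for small $r$, giving $r^\sigma/r^{n-\alpha}$ maximized near $r=|x|$ up to constants). Then
\[
\int_{B(0,1)} \big(M_\alpha^\sigma\mu\big)^p\,dx \gtrsim \int_{B(0,1)} |x|^{(\sigma-n+\alpha)p}\,dx,
\]
which diverges exactly when $(\sigma-n+\alpha)p \le -n$, i.e.\ $p \ge \frac{n}{n-\sigma-\alpha} = 1+\frac{\alpha+\sigma}{n-\sigma-\alpha}$. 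That threshold is larger than $\beta$, so a single Dirac mass does not quite reach it; one instead superposes Diracs (or uses a measure concentrated on a lower-dimensional piece) to lower the exponent. The main obstacle is this sharpness half: one must build a measure whose total variation mass at scale $r$ around the bulk of points behaves like $r^\sigma$ (so that the cap and the ball-measure are comparable and neither is wasteful), e.g.\ $|\mu|$ comparable to $\mathcal H^\sigma$ restricted to a self-similar $\sigma$-set when $\sigma$ is an integer, or a suitable Cantor-type measure in general; then a direct computation of $\int (M_\alpha^\sigma\mu)^p$ near that set shows divergence for every $p\ge\beta$. I would present the clean $\delta_0$ computation to show the phenomenon and then indicate the refinement to hit the exact exponent $\beta$.
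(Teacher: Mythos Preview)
Your argument for the $L^p$ bound is correct and is genuinely different from the paper's. The paper discretizes by dyadic cubes: it bounds $M_\alpha^\sigma\mu(x)^p$ by a sum over scales $2^k$, integrates term by term, and then solves a small optimization problem (maximize $\sum_i a_i^p$ under $\sum_i a_i\le |\mu|(\Omega)$ and $a_i\le 2^{\sigma k}$) to control the contribution of each generation; the resulting geometric series converges exactly when $p<\beta$. Your route instead extracts the weak-type inequality $|\{M_\alpha^\sigma\mu>\lambda\}|\lesssim |\mu|(\Omega)\,\lambda^{-\beta}$ directly from a Vitali cover, using the cap $r^\sigma$ to force $r_x<R_\lambda\approx\lambda^{-1/(n-\sigma-\alpha)}$ and the uncapped part to run the usual disjoint-balls mass estimate; the layer-cake then gives $L^p$ for $p<\beta$. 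Your argument is shorter and more elementary, and it makes the role of the cap completely transparent; the paper's dyadic scheme is a bit heavier but yields slightly more, since it tracks how the total mass $|\mu|(\Omega)$ enters the final constant.

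The sharpness half, however, is not yet a proof. You correctly observe that $\delta_0$ only gives divergence for $p\ge 1+\frac{\sigma+\alpha}{n-\sigma-\alpha}>\beta$ and that the fix is a measure whose mass at scale $r$ is $\approx r^\sigma$ around most nearby points, but you stop at ``indicate the refinement''. The paper carries this out: for integer $\sigma$ it takes $\mu=\mathcal H^\sigma|_E$ with $E$ a $\sigma$-plane through $\Omega$, notes $|\mu|(B(x,2d(x)))\approx d(x)^\sigma$ for $d(x)=\dist(x,E)$, hence $M_\alpha^\sigma\mu(x)\gtrsim d(x)^{\sigma-n+\alpha}$, and then computes
\[
\int_\Omega d(x)^{(\sigma-n+\alpha)p}\,dx \approx \int_0^1 r^{(\sigma-n+\alpha)p}\,r^{\,n-\sigma-1}\,dr,
\]
which diverges precisely when $(\sigma-n+\alpha)p+n-\sigma\le 0$, i.e.\ $p\ge\beta$. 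For non-integer $\sigma$ it replaces the plane by a product of a plane with a Cantor set of the right dimension. You should include this short computation; the key point your sketch is missing is the co-area factor $r^{\,n-\sigma-1}$ coming from integrating over distance to a $\sigma$-dimensional set, which is exactly what drops the threshold from the Dirac value down to $\beta$.
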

\begin{proof}
We consider dyadic cubes intersecting $\Omega$ with side-length at most $\diam\Omega$. 
Specifically, we assume that the cubes are of the form $[a_1,b_1)\times\cdots\times [a_n,b_n)$ 
and denote by $\mathcal D_k$ the set of such cubes with side-length $2^k$.
Let $D^x_k\in \D_k$ be the cube which contains $x$ and 
$3D^x_k$ be its threefold dilate. We define $\mu_k(A):=\min\{|\mu|(A\cap\Omega),2^{\sigma k}\}$. 
If $2^{k-1}\le r< 2^k$, then $B(x,r)\subset 3D^x_k$. Thus 
\[
M_\alpha^\sigma \mu(x) 
\lesssim 
\sup_{k\in K_0} \frac{\mu_k(3D^x_k)}{2^{(n-\alpha)k}},
\]
where $K_0:=\{-\infty,\ldots, k_0\}$ and $k_0$ is the smallest integer with $2^{k_0} > \diam\Omega$. 
We raise this to the power $p$ and estimate the supremum by a sum: 
\[
M_\alpha^\sigma \mu(x)^p 
\lesssim 
\sup_{k\in K_0} \Big(\frac{\mu_k(3D^x_k)}{2^{(n-\alpha)k}}\Big)^p
\le
\sum_{k\in K_0} \Big(\frac{\mu_k(3D^x_k)}{2^{(n-\alpha)k}}\Big)^p. 
\]
Next we integrate over $\Omega$ and use that $\mu_k(3D^x_k)$ can be estimated 
by the sum of $3^n$ terms of the form $\mu_k(D_k)$ with $D_k\in \D_k$. 
Thus we obtain that 
\begin{align*}
\int_\Omega M_\alpha^\sigma \mu(x)^p\, dx
& \lesssim 
\sum_{k\in K_0} 2^{ - (n-\alpha)p k} \int_\Omega \mu_k(D^x_k)^p\, dx \\
&\le
\sum_{k\in K_0} 2^{ - (n-\alpha)p k} \sum_{D\in \D_k} \mu_k(D)^p |D|.
\end{align*}
Let us maximize the sum $\sum_{D\in \D_k} \mu_k(D)^p$ separately for each $k$. 
Since $\D_k\cap \Omega$ is a partition of $\Omega$, we can write this optimization 
problem as 
\[
S_k:= \sup\bigg\{ \sum_i a_i^p\, \Big|\, \sum_i a_i\le |\mu|(\Omega),\ 
a_i \in \big[0, 2^{\sigma k}\big]\bigg\} 
\]
where $a_i=\mu_k(D_i)$ for $D_i\in \D_k$; 
the last restriction holds since $\mu_k(\Omega) \le 2^{\sigma k}$ by the definition of 
$\mu_k$. We consider what values of the $a_i$'s leads to a maximally large sum. 
If $0<a_i<a_j< 2^{\sigma k}$, then 
\[
a_i^p+a_j^p < (a_i-t)^p+(a_j+t)^p
\]
for $0<t<\min\{a_i,2^{\sigma k}-a_j\}$. Therefore the sum is maximized 
subject to the constraints when $a_i=2^{\sigma k}$ for as many indices as possible and 
zero for the rest.
There are no more than $\lceil 2^{-\sigma k}|\mu|(\Omega) \rceil$ such maximal indices. Thus 
\[
S_k \approx 2^{-\sigma k}\,|\mu|(\Omega)\, 2^{\sigma k p}\, \approx 2^{(p-1)\sigma k}. 
\]
We use this estimate in our previous inequality, and conclude that 
\[
\int_\Omega M_\alpha^\sigma \mu(x)^p \, dx
\lesssim 
\sum_{k\in K_0} 2^{- (n-\alpha)p k} 2^{(p-1)\sigma k} 2^{nk}
=
\sum_{k\in K_0} 2^{[ - (n-\alpha)p + (p-1)\sigma + n ]k}.
\]
The last sum is finite if $-(n-\alpha)p +(p-1)\sigma + n>0$, 
which is equivalent to the condition in the proposition.

It remains to prove sharpness. For simplicity we consider only the case when $\sigma$ 
is an integer. We let $E$ be a $\sigma$-dimensional plane and define 
$\mu(A):=\H^{\sigma}(E\cap A)$. Denote $d(x):=\dist(x,E)$. Then 
\[
M_\alpha^\sigma \mu(x) 
\gtrsim
\frac{\mu(B(x,2d(x)))}{|B(x,2d(x))|^{1-\frac\alpha n}} 
\approx 
d(x)^{\sigma-n+\alpha}.
\]
We raise this to the power $p$ and integrate over $x$:
\[
\int_\Omega M_\alpha^\sigma \mu(x)^p \,dx 
\gtrsim
\int_\Omega d(x)^{(\sigma-n+\alpha)p} dx 
\approx 
\int_0^1 r^{(\sigma-n+\alpha)p} r^{n-\sigma-1} dr. 
\]
This integral diverges if $(\sigma-n+\alpha)p + n-\sigma \le 0$, 
which gives the claimed bound for $p$. 
In the case of non-integer $\sigma$, we instead choose our set as 
the Cartesian product of a plane and a Cantor set, and estimate as before. 
\end{proof}

With the fractional maximal operator we can extend Lemma~\ref{lem:limsup} in the case 
of bounded functions. Bounded functions are very natural in the context of 
image processing, since the grey-scale values are usually taken in some compact interval 
such as $[0,255]$ or $[0,1]$. 
Note that to use the previous proposition, we cannot directly move to 
the total variation measure $|Du|$, since this is not in general going to 
satisfy the appropriate decay $r^{n-1}$ when $u$ is bounded. 
Rather, we have to first estimate the absolute value of 
the measure of a ball, $|Du(B(x,r))|$, and only afterward move to $|Du|$. 
In the next result we therefore work with the vector measure 
$Du$ rather than its total variation, which makes the estimates slightly more difficult. 

\begin{lem}\label{lem:limsup2}
Let $F\subset \Omega$ be closed and $a\in C^{0,\alpha}(\Omega)$ for some $\alpha>\frac12$. 
For $\epsilon_i\to 0^+$ and $u\in BV^{1,2}_a(\Omega)\cap L^\infty(\Omega)$, 
there exist $u_i\in W^{1,2}(U)\cap L^\infty(\Omega)$ in a neighborhood $U$ of $F$ such that 
\[
\limsup_{i\to \infty} \I_{\epsilon_i}(u_i,F) \le \I(u,F).
\]
\end{lem}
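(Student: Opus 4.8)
The plan is to follow the structure of the proof of Lemma~\ref{lem:limsup}, again taking $u_i := u * \eta_{\delta_i}$ for a suitable $\delta_i \to 0^+$, but replacing the two crude pointwise bounds on $a(x)|\nabla u_\delta|$ by a single bound involving the \emph{capped fractional maximal function} of the vector measure $Du$. The boundedness $u \in L^\infty(\Omega)$ enters precisely at the point where we estimate the measure of a ball: as flagged in the remark preceding the statement, $|Du|(B(x,r))$ does not in general decay like $r^{n-1}$, but the cancellation in the vector quantity $Du(B(x,r))$ does give us the borderline decay we need when $u$ is bounded. So the first step is to show, for $x$ in (a neighborhood of) $F$ and $r < \operatorname{dist}(F,\partial\Omega)$, an estimate of the form $|Du(B(x,r))| \lesssim \|u\|_{L^\infty(\Omega)}\, r^{n-1}$, using integration by parts / the divergence structure of $Du$ together with $|Du|(B(x,r))\le |Du|(\Omega)<\infty$ to control the remaining pieces; combined with the trivial bound $|Du(B(x,r))|\le |Du|(B(x,r))$, this says $|Du(B(x,r))|\le \min\{|Du|(B(x,r)), c\,r^{n-1}\}$, which is exactly the quantity capped in Proposition~\ref{prop:fractional} with $\sigma = n-1$.

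The second step is the pointwise domination $a(x)|\nabla u_\delta(x)| \lesssim M_\alpha^{\,n-1} (Du)(x)$ for $x$ in the set $\{a>0\}$, valid for \emph{all} $\delta$ small, via the two-case split already used for \eqref{eq:conv2}: when $a(x)\le 2a(y)$ on $B(x,\delta)$ we bound by $M(a|\nabla u|)$ as before (this is already $\le c\,\|a\|_\infty\,Mu$, hence controlled); when $a(x)>2a(y)$ for some $y\in B(x,\delta)$, the Hölder condition $a\in C^{0,\alpha}$ gives $a(x)\lesssim |x-y|^\alpha \le \delta^\alpha$, and using $\nabla u_\delta(x)=\int \eta_\delta(x-y)\,dDu(y)$ with $|\eta_\delta|\lesssim \delta^{-n}\chi_{B(x,\delta)}$ together with the Step-1 bound $|Du(B(x,\delta))|\lesssim \min\{|Du|(B(x,\delta)),\delta^{n-1}\}$ yields $a(x)|\nabla u_\delta(x)| \lesssim \delta^\alpha \cdot \delta^{-n}\min\{|Du|(B(x,\delta)),\delta^{n-1}\} \lesssim M_\alpha^{\,n-1}(Du)(x)$, where one checks the exponents $\alpha - n = -(n-\alpha)$ match $|B(x,\delta)|^{-(1-\alpha/n)}\approx \delta^{\alpha-n}$. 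The key input is then that Proposition~\ref{prop:fractional} with $\sigma=n-1$ requires $\alpha\in(0,1)$ and gives $M_\alpha^{\,n-1}(Du)\in L^p$ for $p<1+\frac{\alpha}{1-\alpha}=\frac{1}{1-\alpha}$; since $\alpha>\tfrac12$ this exponent exceeds $2$, so $M_\alpha^{\,n-1}(Du)\in L^2(\Omega)$ — this is exactly why the hypothesis $\alpha>\tfrac12$ appears, and it furnishes the $\delta$-independent $L^2$ majorant needed for dominated convergence.

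With these two ingredients in hand, the remainder is a copy of the earlier argument: on $\{a>0\}$ we have $\nabla u_\delta \to \nabla u$ a.e.\ and the uniform $L^2$ bound $a|\nabla u_\delta|\lesssim M_\alpha^{\,n-1}(Du)\in L^2(\Omega)$, so dominated convergence gives $\int_F (a(x)|\nabla u_\delta|)^2\,dx \to \int_F (a(x)|\nabla u|)^2\,dx$; meanwhile \cite[Lemma~4.5]{HarHL08} and the classical $L^2$ facts handle $|Du_\delta|(F)$ and $\int_F|u_\delta-f|^2$ as before, and $\|u_\delta\|_{L^\infty}\le\|u\|_{L^\infty}$ so the approximants stay bounded. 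This shows $\limsup_{\delta\to 0}\I(u_\delta,F)\le\I(u,F)$; then, exactly as in Lemma~\ref{lem:limsup}, we use $|\nabla u_\delta|\le c\,\delta^{-n}$ to pass from $\I$ to $\I_{\epsilon_i}$, choosing $\delta_i:=\epsilon_i^{1/(3n)}$ so that $(c\delta_i^{-n})^{\epsilon_i}\to 1$ and $\epsilon_i(c\delta_i^{-n})^2\to 0$, and set $u_i:=u_{\delta_i}$. The main obstacle is Step~1 — establishing the borderline estimate $|Du(B(x,r))|\lesssim \|u\|_{L^\infty}\,r^{n-1}$ for the vector measure — which is the whole reason the lemma is stated for bounded $u$ and is the genuinely new point compared with Lemma~\ref{lem:limsup}; everything else is bookkeeping with constants and exponents.
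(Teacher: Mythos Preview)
Your overall strategy matches the paper's, and your identification of Step~1 --- the borderline estimate $|Du(B(x,r))|\lesssim \|u\|_\infty\,r^{n-1}$ for the \emph{vector} quantity --- is exactly right and is indeed the new ingredient. The gap is in how you use it. In Step~2 you write that $|\eta_\delta|\lesssim \delta^{-n}\chi_{B(x,\delta)}$ together with Step~1 gives $|\nabla u_\delta(x)|\lesssim \delta^{-n}|Du(B(x,\delta))|$. That inequality is false: for a vector measure $\mu$ one only has $\bigl|\int f\,d\mu\bigr|\le \int|f|\,d|\mu|\le \|f\|_\infty\,|\mu|(B)$, not $\|f\|_\infty\,|\mu(B)|$. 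The cancellation recorded in $|Du(B(x,\delta))|$ is destroyed the moment you pull the (non-constant) kernel $\eta_\delta$ out in sup-norm, so Step~1 cannot be invoked in this way at the single scale $\delta$.

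The paper repairs this by exploiting the radiality of $\eta_\delta$: writing $\eta_\delta(x-y)=\int_{|x-y|}^\delta(-\tfrac{d}{dr}\eta_\delta(re_1))\,dr$ gives a layer-cake representation
\[
\nabla u_\delta(x)=\int_0^\delta \bigl(-\tfrac{d}{dr}\eta_\delta(re_1)\bigr)\,Du(B(x,r))\,dr,
\]
and now Step~1 enters legitimately at \emph{every} scale $r\in(0,\delta)$ via $|Du(B(x,r))|\lesssim M_\alpha^{n-1}(Du)(x)\,r^{n-\alpha}$; integrating against $|\tfrac{d}{dr}\eta_\delta|\lesssim\delta^{-n-1}$ yields $|\nabla u_\delta|\lesssim\delta^{-\alpha}M_\alpha^{n-1}(Du)(x)$, and the factor $a(x)\lesssim\delta^\alpha$ finishes. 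Alternatively, you can bypass Step~1 altogether: combine the total-variation bound $|\nabla u_\delta|\lesssim\delta^{-n}|Du|(B(x,\delta))$ with the direct bound $|\nabla u_\delta|=|u*\nabla\eta_\delta|\le\|u\|_\infty\|\nabla\eta_\delta\|_{L^1}\lesssim\|u\|_\infty\,\delta^{-1}$ to get $|\nabla u_\delta|\lesssim\delta^{-n}\min\{|Du|(B(x,\delta)),\delta^{n-1}\}$, which is already dominated by $M_\alpha^{n-1}(Du)(x)\,\delta^{-\alpha}$ by taking $r=\delta$ in the supremum. Either route closes the gap; the rest of your argument is fine.
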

\begin{proof}
The proof is identical to that of Lemma~\ref{lem:limsup}, except for the 
estimate of $a(x) |\nabla u_\delta|$ in the second case, $a(x)< \frac12 a(y)$. 
Let us show that we can use Proposition~\ref{prop:fractional} to handle this case. 
By the construction of the measure $Du$, 
\[
\int_{B(x,r)} \phi\cdot dDu
= 
- \int_{B(x,r)} u \div \phi\, dy
\]
for all $\phi\in C^1_0(B(x,r);\Rn)$, cf.\ \cite[Proposition~3.6]{AmbFP00}. We choose 
$\phi(y) = b \xi(|x-y|)$ where $b\in B(0,1)$ and $\xi\in C^1([0,\infty))$ with 
$\xi|_{[0, r-\epsilon-\epsilon^2]}=1$, $\xi|_{[r-\epsilon^2,\infty)}=0$ and $|\xi'| \le \frac2\epsilon$. Then 
$|\div \phi| \le \frac2\epsilon \chi_{B(x,r-\epsilon^2)\setminus B(x,r-\epsilon-\epsilon^2)}$ 
and so
\[
\Big| \int_{B(x,r)} u \div \phi\, dy\Big|
\le 
\|u\|_\infty \tfrac2\epsilon \, \big|B(x,r-\epsilon^2)\setminus B(x,r-\epsilon-\epsilon^2)\big|
\approx 
r^{n-1}
\]
since $u$ is bounded. It follows by monotone convergence as $\epsilon\to 0^+$ that 
\[
|Du(B(x,r))|
= 
\sup_{|b|=1} b\cdot Du(B(x,r)) 
\lesssim 
r^{n-1}.
\]
Therefore, $|Du(B(x,r))| \lesssim \min\{|Du|(B(x,r)), r^{n-1}\}$ and so 
\[
|Du(B(x,r))| \lesssim M_\alpha^{n-1}(Du)(x) r^{n-\alpha}.
\]

On the other hand, we can estimate for the derivative of the convolution using \eqref{eq:convolution}, 
the distribution function of $Du$ \cite[Theorem~8.16]{Rud87}
and the estimate $| \frac d{dr} \eta_\delta(re_1)| \lesssim \delta^{-n-1}$. 
For a unit vector $e_1$, it follows that  
\begin{align*}
|\nabla u_\delta| 
&\le
\Big|\int_\Rn \eta_\delta(x-y)\, dDu(y) \Big|
%=
%\Big|\int_0^\delta \eta_\delta(r)\, Du(\partial B(x,r))\, dr \Big| \\
=
\Big|\int_0^\delta \tfrac d{dr} \eta_\delta(re_1)\, Du(B(x,r))\, dr \Big| \\
&\le 
\int_0^\delta \big| \tfrac d{dr} \eta_\delta(re_1)\big| \, M_\alpha^{n-1}(Du)(x)r^{n-\alpha}\, dr \\
&\lesssim 
M_\alpha^{n-1}(Du)(x) \delta^{-n-1} \int_0^\delta r^{n-\alpha}\, dr
\approx
\delta^{-\alpha} M_\alpha^{n-1}(Du)(x).
\end{align*}
As in Lemma~\ref{lem:limsup}, we conclude now from $a\in C^{0,\alpha}(\Omega)$ in the second case that 
$a(x) \le \delta^{\alpha}$. Thus  $a(x) |\nabla u_\delta| \lesssim M_\alpha^{n-1}(Du)(x)$.
By Proposition~\ref{prop:fractional}, the right-hand side is in $L^2(\Omega)$ 
provided $2 < 1 + \frac\alpha{n - (n-1) -\alpha} = \frac1{1-\alpha}$, 
which holds since $\alpha>\frac12$. Thus we can use this as the bound 
for dominated convergence. The rest of the proof is as before. 
\end{proof}

\begin{remark}
If we consider a double phase functional $t^p + a(x)t^q$ in ``normal'' form, then 
the condition from the previous results can be written $q<p+\alpha$. 
This condition has proved to be of central importance when considering 
bounded solutions, cf.\ \cite{BarCM18, ColM15b, HarHL_pp18}. 
In this sense, the assumption in Lemma~\ref{lem:limsup2} is probably 
essentially sharp.

However, more precise research has established that one may even 
take $q\le p+\alpha$ for bounded minimizers \cite{BarCM18, HarHT17} (see also \cite{DeFM_pp1, HasO_pp19} for 
the borderline case with unbounded minimizers). 
The borderline is handled using additional H\"older continuity obtained via De Giorgi
technique, which in this case implies that $u\in C^{0,\gamma}(\Omega)$ for some $\gamma>0$. 
Indeed, from the previous proof we can see that 
$a\in C^{0,1/2}(\Omega)$ would suffice if we had $u\in C^{0,\gamma}(\Omega)$ 
for some positive $\gamma>0$ (as one has when $p,q>1$) instead of $u\in L^\infty(\Omega)$. 
However, for $BV$ problems, such higher regularity of the function cannot be expected. 
Therefore, the borderline $q=p+\alpha$ remains a problem for future research. 

Let us also note that Ok \cite{Ok_pp} has considered double phase functionals 
under additional a priori integrability assumptions other than $L^\infty(\Omega)$. 
If one could prove decay estimates $|Du(B(x,r))|\lesssim r^\sigma$ for $\sigma\in (n-1,n)$
when $u\in L^s(\Omega)$, we could cover also this case. 
%Based on those results we can expect that $u\in L^s(\Omega)$ should lead to the 
%capped fractional maximal operator $M_\alpha^{(n-1)(1-1/s)}$ and the bound 
%$\alpha > \frac12 [\frac{n-1}s+1]$ for $\alpha$. 
We do not know about such of results, so this, likewise, remains for a topic for another study. 
\end{remark}

%%%%%%%%%%%%%%%%%%%%%%%%%%%%%%%%%%%%%%%%%%%%%%%%%%%%%%%%
%%%%%%%%%%%%%%%%%%%%%%%%%%%%%%%%%%%%%%%%%%%%%%%%%%%%%%%%
%%%%%%%%%%%%%%%%%%%%%%%%%%%%%%%%%%%%%%%%%%%%%%%%%%%%%%%%

\section{Upper estimates for the \texorpdfstring{$BV$}{BV} double phase functional}

The concept of $\Gamma$-convergence, introduced by De Giorgi and Franzoni \cite{DeGF75}, has been systematically 
presented in \cite{Bra02, Dal93}. %We present here only the definition.
A family of functionals $\I_\epsilon: X \to \overline{\R}$ is said to \textit{$\Gamma$-converge}
(in topology $\tau$) to $\I: X \to \overline{\R}$
if the following hold for every positive sequence $(\epsilon_i)$ converging to zero:
\begin{enumerate}
 \item[(a)]  $\displaystyle \I (u) \le \liminf_{i \to \infty} \I_{\epsilon_i} (u_{i})$ 
for every $u \in X$ and every $(u_{i})\subset X$ $\tau$-converging to $u$;
 \item[(b)]
$\displaystyle \I (u) \ge \limsup_{i \to \infty} \I_{\epsilon_i} (u_{i})$
for every $u \in X$ and some $(u_{i})\subset X$ $\tau$-converging to $u$.
\end{enumerate}
%The sequence in (b) is called the \textit{recovery sequence}.

Let us remark that the somewhat strange assumption $\H^{n-1}(\{a=0\}\cap \partial\Omega)=0$
in the next theorem is actually 
quite natural: since $\{a=0\}$ is the set where the image edges occur, we cannot identify 
the edge if it coincides with the image boundary $\partial \Omega$. On the other hand, 
we also have no need for the jump in the function at this location, since the other part 
of the jump will be outside the image, and thus cannot be seen.

\begin{thm}\label{thm:Gamma}
Suppose that $\Omega$ is a rectangular cuboid, $a\in C^{0,1}(\overline \Omega)$, 
and assume that $a>0$ $\H^{n-1}$-a.e.\ on the boundary $\partial \Omega$. 
Then $\I_\epsilon$ $\Gamma$-converges to  $\I$ in $L^1(\Omega)$ topology with 
$X:=BV^{1,2}_a(\Omega)$.
\end{thm}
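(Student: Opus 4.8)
The plan is to verify the two conditions (a) and (b) in the definition of $\Gamma$-convergence, and the key point is that condition (b) — the upper bound — is essentially already done in Lemma~\ref{lem:limsup}, modulo a localization/patching argument to pass from a closed set $F$ to all of $\Omega$, while condition (a) — the lower bound, or ``liminf inequality'' — is the genuinely new work in this theorem. So I would structure the proof in two halves.

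\textbf{The liminf inequality.} Suppose $u_i \to u$ in $L^1(\Omega)$ with $u_i \in BV^{1,2}_a(\Omega)$; I must show $\I(u,\Omega) \le \liminf_i \I_{\epsilon_i}(u_i,\Omega)$. We may assume the liminf is finite and (passing to a subsequence) is an actual limit, with $\sup_i \I_{\epsilon_i}(u_i,\Omega) < \infty$. The fidelity term $\int_\Omega |u_i-f|^2$ passes to the limit by lower semicontinuity of the $L^2$-norm under weak convergence (or Fatou after extracting an a.e.-convergent subsequence), so I focus on the gradient terms. The bound on $\int_\Omega |\nabla u_i|^{1+\epsilon_i}$ gives, via Jensen/Hölder on the bounded domain $\Omega$, a uniform bound on $\int_\Omega |\nabla u_i|$, so $(u_i)$ is bounded in $W^{1,1}(\Omega) \subset BV(\Omega)$; hence $u \in BV(\Omega)$ and $|Du|(\Omega) \le \liminf_i |Du_i|(\Omega) = \liminf_i \int_\Omega |\nabla u_i|\,dx$. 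To upgrade from $|\nabla u_i|$ to $|\nabla u_i|^{1+\epsilon_i}$ I use that $t \le t^{1+\epsilon} + 1$, more precisely on $\{|\nabla u_i|\ge 1\}$ one has $|\nabla u_i| \le |\nabla u_i|^{1+\epsilon_i}$ and on the complement $|\nabla u_i| \le 1$; integrating, $\int_\Omega |\nabla u_i| \le \int_\Omega |\nabla u_i|^{1+\epsilon_i} + |\Omega|$, which is not quite enough because of the stray $|\Omega|$. The correct route is a more careful lower-semicontinuity argument: for fixed $\lambda \in (0,1)$ write $|\nabla u_i|^{1+\epsilon_i} \ge \lambda |\nabla u_i| - c(\lambda,\epsilon_i)$ where the Young-type constant $c(\lambda,\epsilon_i) \to 0$ as $\epsilon_i \to 0$ (indeed $\sup_{t\ge 0}(\lambda t - t^{1+\epsilon}) = \epsilon(1+\epsilon)^{-(1+\epsilon)/\epsilon}\lambda^{(1+\epsilon)/\epsilon} \to 0$), so $\liminf_i \int_\Omega |\nabla u_i|^{1+\epsilon_i} \ge \lambda \liminf_i \int_\Omega|\nabla u_i| \ge \lambda |Du|(\Omega)$, and let $\lambda \to 1$. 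For the weighted term: from $(u_i)$ bounded in $W^{1,2}_a(\Omega)$ (because $\int_\Omega a^2|\nabla u_i|^2 \le \I_{\epsilon_i}(u_i,\Omega)$) extract a weakly convergent subsequence; the weak limit must be $u$ in the sense that $a\nabla u_i \rightharpoonup a \nabla u$ on $\{a>0\}$, and weak lower semicontinuity of the modular (as cited in the proof of Proposition~\ref{prop:existence}, via \cite[Theorem~2.2.8]{DieHHR11} and \cite[Theorem~2.2.8]{DieHHR11}) gives $\int_\Omega (a|\nabla u|)^2 \le \liminf_i \int_\Omega (a|\nabla u_i|)^2 \le \liminf_i \int_\Omega (\epsilon_i + a^2)|\nabla u_i|^2$. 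Summing the three lower bounds yields (a).

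\textbf{The limsup inequality via localization.} Here the only extra work beyond Lemma~\ref{lem:limsup} is to cover the boundary. Given $u \in BV^{1,2}_a(\Omega)$ and $\epsilon_i \to 0$, I want a recovery sequence on all of $\Omega$. I would first handle the good part of the boundary: the hypotheses $a \in C^{0,1}(\overline\Omega)$, $a>0$ $\H^{n-1}$-a.e.\ on $\partial\Omega$, and $\Omega$ a rectangular cuboid let one extend $u$ slightly across $\partial\Omega$ — e.g.\ by reflection in the faces of the cuboid, which preserves $BV$ and (since $a$ is Lipschitz up to the boundary and positive a.e.\ there) preserves membership in $W^{1,2}_a$ of the extension on a neighborhood $\Omega'\supset\supset\Omega$, because the jump of the extended $Du$ along $\partial\Omega$ lies on $\{a=0\}\cap\partial\Omega$, a set of $\H^{n-1}$-measure zero, hence contributes nothing to the weighted energy. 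Then apply Lemma~\ref{lem:limsup} with $F:=\overline\Omega \subset \Omega'$ to the extended function, obtaining $u_i \in W^{1,2}(U)$ on a neighborhood $U$ of $\overline\Omega$ with $\limsup_i \I_{\epsilon_i}(u_i,\overline\Omega) \le \I(\tilde u,\overline\Omega)$; restricting to $\Omega$ and noting $\I(\tilde u,\overline\Omega) = \I(u,\Omega)$ (the boundary $\partial\Omega$ carries no $|Du|$ mass after the reflection is chosen symmetric, or at worst carries jump mass only on $\{a=0\}$ which still contributes to $|Du|$ — so one must be slightly careful and choose the extension so that no new jump is created, e.g.\ even reflection gives $\H^{n-1}$-a.e.\ continuity across the faces when $u$ has an $L^1$-trace, which it does) gives (b) with $X = BV^{1,2}_a(\Omega)$ after checking $u_i \in BV^{1,2}_a(\Omega)$ (immediate, as $u_i$ is smooth on $U\supset\overline\Omega$).

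\textbf{Main obstacle.} The delicate point is the boundary extension in part (b): one needs an extension operator $BV^{1,2}_a(\Omega) \to BV^{1,2}_a(\Omega')$ that does not create jump contributions with positive weighted energy, and this is exactly where the hypotheses ``$\Omega$ rectangular cuboid'' (so reflection across flat faces is available and clean) and ``$a>0$ $\H^{n-1}$-a.e.\ on $\partial\Omega$'' (so any unavoidable boundary jump sits on a $\H^{n-1}$-null set and is invisible to both $\int(a|\nabla\cdot|)^2$ — which only sees the absolutely continuous part — and... in fact is \emph{not} invisible to $|Du|$, so one really does want the reflection to avoid the jump) are used. I expect the bookkeeping of which energy terms see which parts of the Radon--Nikodym decomposition \eqref{eq:decomposition} to be the fiddly heart of the argument; the interior estimate is entirely supplied by Lemma~\ref{lem:limsup}, and the liminf inequality, while new, follows a standard lower-semicontinuity pattern once the uniform $W^{1,1}\cap W^{1,2}_a$ bound is extracted.
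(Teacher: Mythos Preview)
Your proposal is correct and follows essentially the same route as the paper: condition (a) via weak lower semicontinuity of each term together with a Young-type inequality to absorb the exponent $1+\epsilon_i$, and condition (b) via reflection across the faces of the cuboid followed by Lemma~\ref{lem:limsup} applied with $F=\overline\Omega$. Two minor simplifications the paper makes: your parameter $\lambda<1$ is unnecessary, since already at $\lambda=1$ one has $\sup_{t\ge 0}(t-t^{1+\epsilon}) = \epsilon(1+\epsilon)^{-(1+\epsilon)/\epsilon}\to 0$; and for the equality $\I(\tilde u,\overline\Omega)=\I(u,\Omega)$ the paper argues directly that the singular part of $D\tilde u$ lives in $\{a=0\}$ (since $\tilde u\in W^{1,2}_a$), hence $|D\tilde u|(\partial\Omega)=0$ by the decomposition \eqref{eq:decomposition} and the hypothesis $\H^{n-1}(\{a=0\}\cap\partial\Omega)=0$, which sidesteps your deliberations about whether even reflection creates a jump.
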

\begin{proof}
Let us start with condition (a) in the definition of $\Gamma$-convergence.
Let $(\epsilon_i)$ be a positive sequence converging to zero.
Let $u \in BV^{1,2}_a(\Omega)$ and let $(u_i)\subset BV_a^{1,2}(\Omega)$ 
be a sequence converging to $u$ in $L^1(\Omega)$. 
If  $\liminf_{i \to \infty} \I_{\epsilon_i} (u_i) =\infty$, then there is nothing to prove, 
so we assume that $K:=\liminf_{i \to \infty} \I_{\epsilon_i} (u_i)<\infty$. 
We restrict our attention to a subsequence with $\lim_{i \to \infty} \I_{\epsilon_i} (u_i)=K$ 
and $u_i\in W^{1,2}(\Omega)$. 
Then $(u_i)$ is a bounded sequence in $BV^{1,2}_a(\Omega)$. 
By precompactness of $BV$ there exists a limit function for a subsequence such that 
$|Du^b|(\Omega)\le \liminf |Du_i|(\Omega)$;
by reflexivity of $W^{1,2}_a(\Omega)$ and $L^2(\Omega)$, we obtain subsequences with 
$\nabla u_i \rightharpoonup \nabla  u^w$, $u_i \rightharpoonup u^w$ in $L^2_a(\Omega)$
and $u_i-f \rightharpoonup u^l-f\ \text{in }L^2(\Omega)$.
By $u_i\to u$ in $L^1(\Omega)$ and the uniqueness of the limit, we conclude that 
$u^b=u^w=u^l=u$. 

The weak lower semi-continuity of the Lebesgue integral yields that
\[
\int_\Omega |u-f|^2 \, dx \le \liminf_{i \to \infty} \int_\Omega |u_i-f|^2 \, dx
\] 
and, since $\epsilon_i\ge 0$, 
\[
\int_\Omega (a(x) |\nabla u|)^2  \, dx 
\le  \liminf_{i \to \infty} \int_\Omega (a(x)|\nabla u_i|)^2 \, dx
\le \liminf_{i \to \infty} \int_\Omega (\epsilon_i + a(x)^2)|\nabla u_i|^2 \, dx.
\] 
Finally, for the $BV$ part we use the estimate from the previous paragraph, Young's inequality 
and $(\frac{1}{1+ \epsilon_i})^{1/{\epsilon_i}} \to \frac1e$: 
\[
\begin{split}
|Du|(\Omega) 
&\le \liminf_{i \to \infty}|Du_i|(\Omega) 
= \liminf_{i\to\infty} \int_\Omega |\nabla u_i|\, dx \\
&\le \liminf_{i \to \infty} \int_\Omega |\nabla u_i|^{1+\epsilon_i} + 
\Big(\frac{1}{1+ \epsilon_i}\Big)^{\frac1{\epsilon_i}} \frac{\epsilon_i}{1-\epsilon_i} \, dx
%\\
%&= \liminf_{i \to \infty} \int_\Omega |\nabla u_i|^{1+\epsilon_i} \, dx +
%\tfrac1e \cdot 0 \cdot |\Omega|
= \liminf_{i \to \infty} \int_\Omega |\nabla u_i|^{1+\epsilon_i} \, dx.
\end{split}
\] 
By combining the above inequalities we obtain condition (a). Note that 
for this part we do not need the assumptions on $\Omega$ and $a$.

Let us then move to condition (b). Since $\Omega$ is a rectangular cuboid, 
we can extend both the function $u$ and the weight $a$ by reflections to 
the rectangular cuboid with the same center but $3$ times the side-lengths. 
Then we use Lemma~\ref{lem:limsup} with $F:=\overline\Omega$ to conclude that 
there exist $u_i\in W^{1,2}(U)$ such that
\[
\limsup_{i\to \infty} \I_{\epsilon_i}(u_i,\overline\Omega) \le \I(u,\overline\Omega).
\]
We need this inequality with $\Omega$ instead of $\overline\Omega$. 
Since $|\partial \Omega|=0$ and $u_i$ is a Sobolev function, 
$\I_{\epsilon_i}(u_i,\overline\Omega)=\I_{\epsilon_i}(u_i,\Omega)$. 
On the right-hand side, the same reason implies that
\[
\int_{\overline \Omega} (a(x)|\nabla u|)^2 + |u-f|^2\, dx 
=
\int_{\Omega} (a(x)|\nabla u|)^2 + |u-f|^2\, dx.
\]
The singular set of $Du$ is contained in $\{a=0\}$ because $u\in W^{1,2}_a(U)$. 
Since $\{a=0\}\cap \partial \Omega$ 
has Hausdorff $(n-1)$-measure zero by assumption, it follows by the decomposition 
\eqref{eq:decomposition} that $|Du|(\partial \Omega)=0$ and 
so $|Du|(\overline \Omega) = |Du|(\Omega)$. Thus we 
have established condition (b) of $\Gamma$-convergence. 
\end{proof}

In the previous theorem we could consider a Lipschitz domain instead of 
a rectangular cuboid. In this case, the extension of both $u$ and $a$ would 
be done by flattening the boundary with the Lipschitz map. 
If we use Lemma~\ref{lem:limsup2} instead of Lemma~\ref{lem:limsup}, we 
obtain the following variant. 

\begin{thm}\label{thm:Gamma2}
Suppose that $\Omega$ is a bounded Lipschitz domain, $a\in C^{0,\alpha}(\overline \Omega)$ 
for some $\alpha>\frac12$, and assume that $a>0$ $\H^{n-1}$-a.e.\ on the boundary $\partial \Omega$. 
Then $\I_\epsilon$ $\Gamma$-converges to $\I$ in $L^1(\Omega)$ topology with 
$X:=BV^{1,2}_a(\Omega)\cap L^\infty(\Omega)$.
\end{thm}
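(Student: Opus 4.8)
The plan is to mirror the proof of Theorem~\ref{thm:Gamma} essentially verbatim, substituting Lemma~\ref{lem:limsup2} for Lemma~\ref{lem:limsup} and carrying the extra $L^\infty$ bound through the argument. For condition (a) of $\Gamma$-convergence, nothing in the earlier proof used the cuboid or the precise H\"older exponent: given $u\in BV^{1,2}_a(\Omega)\cap L^\infty(\Omega)$ and a sequence $u_i\to u$ in $L^1(\Omega)$ with $\liminf \I_{\epsilon_i}(u_i)=:K<\infty$, one passes to a subsequence realizing the liminf with each $u_i\in W^{1,2}(\Omega)$, extracts the various weakly/$L^1$-convergent subsequences, identifies all limits with $u$, and then assembles the three lower-semicontinuity estimates (the fidelity term by weak lower semicontinuity in $L^2$, the weighted gradient term using $\epsilon_i\ge0$ and weak lower semicontinuity of the $L^2_a$-modular, and the $BV$ term via Young's inequality and $(1+\epsilon_i)^{-1/\epsilon_i}\to 1/e$). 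I would simply write that this half is identical to the proof of Theorem~\ref{thm:Gamma}, noting that $u\in L^\infty(\Omega)$ is inherited by the limit (or irrelevant here) so the limit indeed lies in $X$.

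For condition (b), I would again use the Lipschitz structure of $\Omega$ to extend both $u$ and $a$ across $\partial\Omega$ — here by flattening the boundary with the bi-Lipschitz charts rather than by reflection — obtaining an extension on a neighborhood of $\overline\Omega$ that still lies in $BV^{1,2}_a\cap L^\infty$ with $a\in C^{0,\alpha}$; the point is that $a\in C^{0,\alpha}(\overline\Omega)$ with $\alpha>\tfrac12$ is preserved under composition with Lipschitz maps (H\"older exponents do not improve but also do not worsen below what we need). Then apply Lemma~\ref{lem:limsup2} with $F:=\overline\Omega$ to produce $u_i\in W^{1,2}(U)\cap L^\infty(\Omega)$ with $\limsup_i \I_{\epsilon_i}(u_i,\overline\Omega)\le \I(u,\overline\Omega)$. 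Finally, pass from $\overline\Omega$ to $\Omega$ exactly as before: $|\partial\Omega|=0$ and the $u_i$ are Sobolev, so the left-hand energies and the absolutely continuous part of the right-hand side are unchanged; and since $u\in W^{1,2}_a(U)$ forces the singular part of $Du$ into $\{a=0\}$, while $\H^{n-1}(\{a=0\}\cap\partial\Omega)=0$ by hypothesis, the decomposition~\eqref{eq:decomposition} gives $|Du|(\partial\Omega)=0$, hence $|Du|(\overline\Omega)=|Du|(\Omega)$. This yields $\limsup_i \I_{\epsilon_i}(u_i,\Omega)\le \I(u,\Omega)$, which is (b).

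The only genuinely new point, and the one I expect to require the most care, is the boundary extension for a general Lipschitz domain: one must check that flattening charts produce an extension in $W^{1,2}_a$ (the weight transforms acceptably, using $a\in C^{0,\alpha}$ and the fact that the Jacobians of bi-Lipschitz maps are bounded above and below), that the $BV$ and $L^\infty$ norms are controlled, and that the reflected weight still satisfies $a\in C^{0,\alpha}$ across the flattened interface. Everything else is a direct transcription of the cuboid case. I would therefore present the proof tersely, saying ``the proof is that of Theorem~\ref{thm:Gamma}, with the reflection replaced by Lipschitz flattening and Lemma~\ref{lem:limsup} replaced by Lemma~\ref{lem:limsup2}'', and spend whatever space is needed only on justifying the extension and on noting that $\alpha>\tfrac12$ is exactly what Lemma~\ref{lem:limsup2} demands for the dominated-convergence bound $M_\alpha^{n-1}(Du)\in L^2(\Omega)$.
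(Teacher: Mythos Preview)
Your proposal is correct and matches the paper's approach essentially verbatim: the paper states only that one follows the proof of Theorem~\ref{thm:Gamma}, replacing the reflection across the cuboid faces by extension via Lipschitz flattening of the boundary, and replacing Lemma~\ref{lem:limsup} by Lemma~\ref{lem:limsup2}. Your write-up is in fact more detailed than the paper's, which does not spell out the verification that the flattened extension preserves $C^{0,\alpha}$, $BV$, $W^{1,2}_a$, and $L^\infty$ regularity.
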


We use the following formulation for relaxation, which emphasizes the connection with 
$\Gamma$-convergence. A functional $\overline\J : X \to \overline\R$ is
the \textit{relaxation of $\J:X\to\overline\R$} in topology $\tau$ if 
\begin{enumerate}
 \item[(a)] $\displaystyle\overline{\J} (u) \le \liminf_{i \to \infty} \J (u_{i})$ 
for every $u \in X$ and every $(u_{i})\subset X$ $\tau$-converging to $u$;
 \item[(b)] $\displaystyle\overline\J (u) \ge \limsup_{i \to \infty} \J (u_{i})$ 
for every $u \in X$ and some $(u_{i})\subset X$ $\tau$-converging to $u$.
\end{enumerate}
The relaxation is the greatest lower-semicontinuous minorant of $\J$. See \cite[Proposition~1.31, p.~33]{Bra02}.
Let us write for $u \in BV(\Omega)$ that
\[
\J(u):= 
\begin{cases}
\int_\Omega  |\nabla u| + (a(x) |\nabla u|)^2 + |u-f|^2  \, dx, & \text{if } u\in W^{1,1}(\Omega) \\
\infty, & \text{if } u\in BV(\Omega)\setminus W^{1,1}(\Omega).
\end{cases} 
\]
We show that the relaxation $\overline \J$ of this functional equals $\I$. 
The proof is identical to Theorem~\ref{thm:Gamma}, we simply take $\I_\epsilon=\J$ for 
every $\epsilon>0$ and $\I=\overline\J$. Naturally, we could also 
prove an analogue to Theorem~\ref{thm:Gamma2}. 

\begin{cor}\label{cor:relaxation}
Suppose that $\Omega$ is a rectangular cuboid, $a\in C^{0,1}(\overline \Omega)$, 
and assume that $a>0$ $\H^{n-1}$-a.e.\ on the boundary $\partial \Omega$. 
Then $\overline\J = \I$ in $L^1(\Omega)$ topology.
\end{cor}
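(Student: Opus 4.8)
The plan is to reduce Corollary~\ref{cor:relaxation} to Theorem~\ref{thm:Gamma} by the trick the authors have already flagged: take the $\epsilon$-family to be the \emph{constant} family $\I_\epsilon := \J$ for every $\epsilon>0$, and identify the $\Gamma$-limit $\I$ with the relaxation $\overline\J$. The two-part definition of relaxation given just before the statement is literally the two-part definition of $\Gamma$-convergence with $\I_{\epsilon_i}$ replaced by $\J$ and $\I$ by $\overline\J$; since relaxation (the greatest lower-semicontinuous minorant) is unique by \cite[Proposition~1.31, p.~33]{Bra02}, it suffices to verify that $\I$ satisfies conditions (a) and (b) with $\I_{\epsilon_i}$ replaced by $\J$, and then conclude $\overline\J = \I$.

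First I would check condition (a): for $u\in BV(\Omega)$ and any $(u_i)\subset BV(\Omega)$ with $u_i\to u$ in $L^1(\Omega)$, we must show $\I(u,\Omega)\le\liminf_i \J(u_i)$. If the liminf is infinite there is nothing to prove; otherwise, passing to a subsequence realizing the liminf, all but finitely many $u_i$ lie in $W^{1,1}(\Omega)$, and moreover the sequence is bounded in $BV(\Omega)$, in $W^{1,2}_a(\Omega)$ and in $L^2(\Omega)$. This is exactly the situation in the proof of Theorem~\ref{thm:Gamma}(a), \emph{minus} the Young-inequality step that converts $\int|\nabla u_i|^{1+\epsilon_i}$ into $\int|\nabla u_i|$: here the first term of $\J(u_i)$ is already $\int_\Omega|\nabla u_i|\,dx = |Du_i|(\Omega)$, so lower semicontinuity of the total variation \eqref{eq:precomactness} gives $|Du|(\Omega)\le\liminf|Du_i|(\Omega)$ directly. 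The weighted term $\int_\Omega(a|\nabla u|)^2\,dx$ is controlled by weak lower semicontinuity of the modular in $W^{1,2}_a(\Omega)$ (as in Proposition~\ref{prop:existence}) after extracting a weakly convergent subsequence, with the weak limit identified as $u$ by uniqueness of the $L^1$-limit; the fidelity term is handled the same way. Adding the three inequalities yields (a), and as in Theorem~\ref{thm:Gamma} this part needs no hypothesis on $\Omega$ or $a$.

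For condition (b), given $u\in BV^{1,2}_a(\Omega)$ I would follow the proof of Theorem~\ref{thm:Gamma}(b) verbatim: extend $u$ and $a$ by reflection across the faces of the cuboid to the threefold-dilated cuboid (using $a\in C^{0,1}(\overline\Omega)$ so the extended weight is still Lipschitz), apply Lemma~\ref{lem:limsup} with $F:=\overline\Omega$ to obtain $u_i\in W^{1,2}(U)$ with $\limsup_i\I_{\epsilon_i}(u_i,\overline\Omega)\le\I(u,\overline\Omega)$, and observe that since the $u_i$ are Sobolev functions and $|\partial\Omega|=0$ one may replace $\overline\Omega$ by $\Omega$ on the left, while on the right $|Du|(\partial\Omega)=0$ follows from the decomposition \eqref{eq:decomposition} together with the hypotheses $\H^{n-1}(\{a=0\}\cap\partial\Omega)=0$ and $u\in W^{1,2}_a$ (so the singular part of $Du$ is supported in $\{a=0\}$). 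The only thing to be slightly careful about is consistency: in this setting $\I_{\epsilon_i}$ is the constant functional $\J$, and $u_i:=u_{\delta_i}\in W^{1,2}(U)\subset W^{1,1}(\Omega)$, so $\J(u_i)=\I_{\epsilon_i}(u_i,\Omega)$ is exactly the quantity that Lemma~\ref{lem:limsup} estimates with $\epsilon_i=0$ in the first term (and here the first term of $\J$ is just $|\nabla u_i|$, matching $\I$); the $\limsup$ computation at the end of the proof of Lemma~\ref{lem:limsup} then reads off directly without the $(c/\delta^n)^{\epsilon_i}$ correction.

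I do not expect any genuine obstacle — the corollary is designed to be a corollary — but if I had to name the delicate point it would be making sure the relaxation and $\Gamma$-convergence formulations are being matched correctly: one must note that $\overline\J$ is \emph{defined} (via Braides) to be the unique functional satisfying (a)--(b), verify that $\I$ satisfies those same two conditions against the constant family $\J$, and only then invoke uniqueness to conclude $\overline\J=\I$ on all of $X$. Also worth a sentence is that the reflection extension of a $W^{1,1}$ (resp.\ $BV$) function across a flat face genuinely lands back in $W^{1,1}$ (resp.\ $BV$) of the larger cuboid and does not create spurious jumps on $\partial\Omega$, which is standard for cuboids; this is precisely why the theorem is stated for a rectangular cuboid rather than a general domain, and an analogous argument with a bi-Lipschitz flattening would give the $C^{0,\alpha}$, $L^\infty$ variant alluded to after the statement.
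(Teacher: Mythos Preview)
Your proposal is correct and is exactly the approach the paper takes: the authors state that ``the proof is identical to Theorem~\ref{thm:Gamma}, we simply take $\I_\epsilon=\J$ for every $\epsilon>0$ and $\I=\overline\J$.'' Your walkthrough of how parts (a) and (b) specialize (dropping the Young-inequality step since the first term is already $\int|\nabla u_i|$, and reading off Lemma~\ref{lem:limsup} with $\epsilon_i=0$) is precisely what that one-line remark unpacks to.
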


\section*{Acknowledgement}

We thank the referee for some comments regarding this manuscript.

%%%%%%%%%%%%%%%%%%%%%%%%%%%%%%%%%%%%%%%%%%%%%%%%%%%%%%%%%%%%%%%%%%%%%%%%%%%%%%
%%%%%%%%%%%%%%%%%%%%%%%%%%%%%%%%%%%%%%%%%%%%%%%%%%%%%%%%%%%%%%%%%%%%%%%%%%%%%%
%%%%%%%%%%%%%%%%%%%%%%%%%%%%%%%%%%%%%%%%%%%%%%%%%%%%%%%%%%%%%%%%%%%%%%%%%%%%%%

\bibliographystyle{amsplain}

\end{document}